\crefname{lem}{le lemme}{les lemmes}
\newtheorem{thm}{Theorem}[section]
\newtheorem{prop}[thm]{Proposition}
\newtheorem{lem}[thm]{Lemma}
\newtheorem{remark}[thm]{Remark}
\theoremstyle{definition}
\newtheorem{definition}[thm]{Definition}
\numberwithin{equation}{section}
\newcommand{\m}{{\rm{m}}}
\newcommand{\M}{{\rm{M}}}
\newcommand{\F}{{\rm{F}}}
\begin{document}

\title{Triple lines on a cubic threefold}
\author{Gloire Grâce Bockondas}
\address{Gloire Grâce Bockondas, Département de Mathématiques, Université Marien Ngouabi, Brazzaville, Congo}
\email{gloire.bockondas@umng.cg}
\urladdr{https://sites.google.com/view/gloiregbockondas/} 
\author{Samuel Boissière}
\address{ Samuel Boissière, Laboratoire de Mathématiques et Applications, UMR 7348 du CNRS,
Bâtiment H3, Boulevard Marie et Pierre Curie, Site du Futuroscope, TSA 61125, 86073 Poitiers Cedex 9, France}
\email{samuel.boissiere@math.univ-poitiers.fr}
\urladdr{http://www-math.sp2mi.univ-poitiers.fr/~sboissie/} 

\begin{abstract}
The present paper deals with lines contained in a smooth complex cubic threefold. It is well-known that the set of lines of the second type on a cubic threefold is a curve on its Fano surface. Here we give a description of the singularities of this curve.
\end{abstract}

\keywords{cubic threefold, Fano surface, triple line.}
\subjclass{14J10; 14J29; 14H20}

\maketitle

\section{Introduction}

Let $X\subset\mathbb{P}^{4}$ be a smooth complex cubic threefold and $\ell\subset X$ a line. The variety that parametrizes the lines on $X$ is a smooth surface of general type called the Fano surface $\F(X)$ of $X$ (see \cite{roulleau2011fano}). Cubic threefolds and their Fano surfaces of lines is an object of interest in algebraic geometry going back to Fano's work in 1904. Later on, their geometry has been further  studied by Clemens and Griffiths who proved the irrationality of the cubic threefold by introducing the intermediate Jacobian as a principal tool \cite{clemens1972intermediate}, Murre studied the geometry of the Fano variety of lines of a smooth cubic threefold \cite{murre1972algebraic}, and Roulleau gave the classification of the configurations of the elliptic curves on the Fano surface of a smooth cubic threefold \cite{roulleau2009elliptic}. We can also cite Altman and Kleiman work \cite{altman1977foundations} and Tjurin papers \cite{tjurin1971geometry, tjurin1972set}. In this work we are interested in lines contained in $X$. They are either of the first type or of the second type depending on the decomposition of the normal bundle $\mathcal{N}_{\ell/X}$. Lines of the first type are generic while the locus of lines of the second type defines a curve $\M(X)$ in the associated Fano surface $\F(X)$ (see \cite{murre1972algebraic}). In the curve $\M(X)$ of lines of the second type there may be particular lines $\ell\subset X$ for which there exists a unique 2-plane $P\supset \ell$ tritangent to $X$ along all of $\ell$ called triple lines. But this only occurs for at most a finite number of lines (see \cite{clemens1972intermediate}). By using local computations Murre proved that the curve $\M(X)$ of lines of the second type is smooth \cite{murre1972algebraic}. However, it is well-known to the experts that this curve can be singular and its singularities correspond to triple lines on $X$ (see \cite{lahoz2021geometry}, \cite{huybrechts2019geometry}, \cite{roulleau2011fano}). Nevertheless, no complete proof does appear in the literature. The purpose of this paper is to fill this gap by giving a complete proof of Theorem \ref{th1} following the techniques of \cite{murre1972algebraic}.

\begin{restatable}[]{thm}{Triplelines}\label{th1}
The triple lines on a cubic threefold are exactly the singular points of the curve of lines of the second type.
\end{restatable}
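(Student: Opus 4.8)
The plan is to reduce the statement to an explicit local computation on the Fano surface, following Murre's normal-bundle approach. Fix a line $\ell$ of the second type and choose homogeneous coordinates $[x_0:\cdots:x_4]$ so that $\ell=\{x_2=x_3=x_4=0\}$. Writing $F=F_1+F_2+F_3$ graded by the degree in $(x_2,x_3,x_4)$ (the degree-zero part vanishes because $\ell\subset X$), the restriction of the partial derivatives to $\ell$ produces three binary quadratic forms $A_i(x_0,x_1)=\partial F/\partial x_i|_\ell$, $i=2,3,4$, which present the normal bundle as the kernel of $(A_2,A_3,A_4)\colon\mathcal{O}_\ell(1)^{\oplus 3}\to\mathcal{O}_\ell(3)$. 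First I would record two facts that follow directly from this description: $\ell$ is of the second type exactly when the $A_i$ are linearly dependent, so that $\M(X)$ is cut out by the vanishing of the $3\times3$ determinant $\det M$ of their coefficient matrix $M$; and, since $X$ is smooth, the $A_i$ have no common zero, for such a zero would be a singular point of $X$ lying on $\ell$. A short degree argument with $\mathcal{N}_{\ell/X}$ (a rank $\leq 1$ matrix would embed $\mathcal{O}_\ell(1)^{\oplus 2}$ into the degree-zero bundle $\mathcal{N}_{\ell/X}$) shows moreover that $M$ has rank exactly $2$ at every point of $\M(X)$.

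Next I would put $F$ into a normal form adapted to $\ell$. Using the relation $\sum_i\lambda_iA_i=0$, which is unique up to scale, a linear change of $x_2,x_3,x_4$ lets me assume $\lambda=(0,0,1)$, that is $A_4=0$ with $A_2,A_3$ linearly independent and without common zero. In this normal form both characterizations become explicit. A $2$-plane $P\supset\ell$ corresponds to a direction $a=(a_2,a_3,a_4)$, and $P\cap X=3\ell$ holds iff $\sum_i a_iA_i=0$ and the binary form $\sum_{2\le i\le j\le 4}B_{ij}a_ia_j$ vanishes, where the $B_{ij}$ are the linear coefficients of $F_2$. The first condition forces $a=(0,0,1)$, which also explains the uniqueness of $P$, and the second then reduces to $B_{44}=0$. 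Thus \emph{$\ell$ is a triple line if and only if $B_{44}\equiv 0$}.

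It remains to prove that $\ell$ is a singular point of $\M(X)$ if and only if $B_{44}\equiv 0$. For this I would use the standard chart of the Grassmannian around $\ell$ in which a nearby line is the graph $[s:t]\mapsto[s:t:g_2:g_3:g_4]$ with $g_i=u_is+v_it$, so that $\F(X)$ is defined by the four coefficients of $F(s,t,g_2,g_3,g_4)$ viewed as a cubic in $(s,t)$. Its linear part is a $4\times 6$ Jacobian whose kernel, by smoothness of $\F(X)$, is the two-dimensional tangent space; using $A_4=0$ together with the invertibility of the Sylvester block of $A_2,A_3$ (guaranteed by the absence of a common zero), I would show that $T_\ell\F(X)$ is spanned by the two directions $\partial_{u_4},\partial_{v_4}$, i.e. by the deformations of $\ell$ inside the plane $\{x_2=x_3=0\}$. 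I would then differentiate $\det M$ along $\F(X)$: since $M$ has rank $2$ with left kernel $\lambda=(0,0,1)$ and right kernel $r=(r_0,r_1,r_2)$, the adjugate formula gives $d(\det M)=c\,\langle\delta A_4,r\rangle$ with $c\neq 0$, so only the first-order variation of $A_4$ enters, and along the $x_4$-directions this variation equals $2B_{44}(s,t)(u_4s+v_4t)$.

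Finally I would read off the equivalence. Writing $B_{44}=ps+qt$, the restriction of $d(\det M)$ to $T_\ell\F(X)$ is the functional $(u_4,v_4)\mapsto 2c\,\langle B_{44}\cdot(u_4s+v_4t),r\rangle$, which vanishes identically iff $pr_0+qr_1=0$ and $pr_1+qr_2=0$. If $B_{44}\equiv 0$ this is automatic and $\ell$ is singular; if $B_{44}\not\equiv 0$ it can hold only when $r_0r_2-r_1^2=0$, that is when $r$ lies on the Veronese conic and is therefore a common zero of $A_2,A_3$, which is excluded by the smoothness of $X$. Hence $\ell$ is a singular point of $\M(X)$ iff $B_{44}\equiv 0$ iff $\ell$ is a triple line. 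The main obstacle in making this rigorous is the bookkeeping of the first-order deformation of the normal-bundle presentation; the crucial simplification is that, because the left kernel is $\lambda=(0,0,1)$, only the variation of $A_4$ contributes and the reparametrization and frame corrections affecting $A_2,A_3$ drop out, and it is precisely the normal form $A_4=0$ that makes this transparent.
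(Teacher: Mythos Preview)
Your proof is correct and follows essentially the same route as the paper. Both arguments normalise coordinates so that $\ell=\{x_2=x_3=x_4=0\}$ and the unique tangent $2$-plane is $\{x_2=x_3=0\}$ (so that $A_4=0$), identify $T_{[\ell]}\F(X)$ with the $(u_4,v_4)$-plane via the nonvanishing of the Sylvester resultant of $A_2,A_3$, characterise triple lines by $B_{44}\equiv 0$ (the paper's $\phi_{4,4}^{1,0}=\phi_{4,4}^{0,1}=0$), and finish by showing that the differential of $\det M$ restricted to $T_{[\ell]}\F(X)$ vanishes iff a certain $2\times 2$ matrix annihilates $(p,q)$, where that matrix is invertible because its singularity would produce a common zero of $A_2,A_3$. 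Your packaging is slightly more conceptual—the adjugate identity $d(\det M)=\lambda^{T}(dM)\,r$ at a rank-two point makes transparent why only $\delta A_4$ matters, and the Veronese-conic condition $r_0r_2-r_1^2=0$ is exactly the paper's $\det B=0$—but the substance and every key step coincide.
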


Consider the Fermat cubic in $\mathbb{P}^{4}$ defined by
$$ F_{4} = \lbrace x_{0}^{3} + x_{1}^{3} + x_{2}^{3} + x_{3}^{3} + x_{4}^{3} = 0 \rbrace.$$ 

\begin{restatable}[]{prop}{Fermat}
The Fermat cubic $F_{4}\subset\mathbb{P}^{4}$ contains exactly 135 triple lines.
\end{restatable}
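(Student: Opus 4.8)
The plan is to translate ``triple line'' into a concrete linear-algebra condition on a $3$-dimensional subspace of $\mathbb{C}^5$, to exhibit $135$ lines meeting it, and then to prove that there are no others.

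First I would reduce to a plane-restriction problem. Following the intro, a line $\ell\subset X$ is a triple line exactly when there is a plane $P\supset\ell$ with $P\cap X=3\ell$, that is, when the restriction of $F_4$ to the $3$-dimensional linear subspace $W\subset\mathbb{C}^5$ underlying $P$ equals the cube $L^3$ of a linear form $L$ cutting out $\ell$ in $P$. Let $\tilde F(X,Y,Z)=\sum_{i=0}^4 X_iY_iZ_i$ be the symmetric trilinear form of the Fermat cubic, choose points $A,B$ spanning $\ell$ and $C$ with $P=\langle A,B,C\rangle$, and use coordinates $(s,t,u)$ on $P$ dual to $A,B,C$ so that $\ell=\{u=0\}$. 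Then $F_4|_W=\mathrm{const}\cdot u^3$ amounts to the vanishing of the nine coefficients of the monomials $\neq u^3$ in $F_4(sA+tB+uC)$, together with $\sum_i c_i^3\neq 0$. Writing $u*v:=(u_iv_i)_i$ for the coordinatewise product, these nine equations say precisely that $a*a,\,a*b,\,a*c,\,b*b,\,b*c$ are orthogonal to $W$ and that $c*c$ is orthogonal to $\ell$.

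Next I would produce the $135$ lines. For each index $m$ the hyperplane $\{x_m=0\}$ cuts $X$ in a smooth Fermat cubic surface, which carries exactly $27$ lines. For such a line $\ell$ I would take $P=\langle\ell,e_m\rangle$: since $F_4$ vanishes on $\ell$ and $x_m\equiv 0$ on $\ell$, a direct expansion gives $F_4|_P=u^3$ with $u$ the $m$-th coordinate, so $\ell$ is a triple line. This yields $5\cdot 27=135$ triple lines, and they are pairwise distinct, because a line lying in two coordinate hyperplanes would lie on a plane Fermat cubic curve $\{x_i^3+x_j^3+x_k^3=0\}$, which is smooth and hence contains no line.

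Finally, and this is the main obstacle, I would show these are all. Starting from a triple line with plane $P$ and a basis $A,B$ of $\ell$, the orthogonality relations for $a*a,\,a*b,\,b*b$ force them into the $2$-dimensional space $W^{\perp}$, so the five points $[a_i^2:a_ib_i:b_i^2]$ lie on a line of $\mathbb{P}^2$; as they also lie on the rational normal conic $\{Y^2=XZ\}$, at most two distinct ratios $[a_i:b_i]$ occur. If some pair $(a_i,b_i)$ vanishes, then $\ell\subset\{x_i=0\}$ and we are in the previous case. Otherwise the five indices split into two blocks (a block of size $1$ is impossible, since $\ell\subset X$ forces the sum of the cubes over each block to vanish), hence into blocks of sizes $2$ and $3$; reparametrizing $A,B$ to be supported on the two blocks, the remaining conditions on $C$ decouple block by block. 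The crux is that on the block of size $3$ the linear equation $\sum u_i^2 c_i=0$ is exactly the tangent line, at the point $(u_2,u_3,u_4)=B$, to the conic $\sum u_i c_i^2=0$ cut out by the quadratic equation, so the two meet only at $B$; similarly the block of size $2$ forces $C$ onto $A$. Thus every admissible $C$ lies in $\langle A,B\rangle=\ell$, no tritangent plane exists, and no triple line of this type occurs. I therefore conclude that the triple lines are exactly the $135$ lines found above.
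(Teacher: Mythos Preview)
Your argument is correct and takes a genuinely different route from the paper. The paper proves the count computationally: it writes down the local equations of $\M(F_4)$ in the chart $p_{0,1}=1$, observes that the extra equation $\m(\ell)=0$ factors as a product of three quadrics, intersects the resulting components pairwise to locate the singular points of $\M(F_4)$ (which are the triple lines by the paper's Theorem~\ref{th1}), and then finishes by a Plücker stratification handled with SageMath to cover the remaining charts without double counting. In particular, the paper's proof relies on Theorem~\ref{th1} and on computer algebra.

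Your proof is instead a direct, coordinate-wise classification that exploits the diagonal structure of the Fermat form. The construction step identifies the $135$ triple lines intrinsically as the $27$ lines on each of the five Fermat cubic surfaces $X\cap\{x_m=0\}$, with tritangent plane $\langle\ell,e_m\rangle$; this is more transparent than the paper's enumeration and explains at once why the number is $5\cdot 27$. The exclusion step is the genuinely new idea: the observation that $a*a,\,a*b,\,b*b\in W^{\perp}$ forces the five points $[a_i^2:a_ib_i:b_i^2]$ onto a line of the conic $Y^2=XZ$, hence at most two ratios $[a_i:b_i]$ occur, reducing to a $(2,3)$ block decomposition; and then the tangent--conic coincidence on the size-$3$ block (and its degenerate analogue on the size-$2$ block) pins $(c_i)$ onto $\langle A,B\rangle$. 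This is an elegant, computer-free argument that does not invoke the singular-locus characterisation at all. What the paper's approach buys in exchange is that it simultaneously exhibits the reducible structure of $\M(F_4)$ and confirms Theorem~\ref{th1} on a concrete example.
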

\textbf{Acknowledgements}.
We would like to thank  Xavier Roulleau, Daniel Huybrechts for helpful discussions, and Mart\'{i} Lahoz, Juan Carlos Naranjo and Andr\'{e}s Rojas for their comments. The first author has been supported by the Program EMS SIMONS for Africa and the ``Laboratoire de Mathématiques et Applications de l'Université de Poitiers UMR CNRS 7348".

\section{Notation and preliminaries}

For $X\subset\mathbb{P}^{4}$ a smooth complex cubic threefold, we denote by $\F(X)$ the Fano surface of lines and $\M(X)$ the curve of lines of the second type. Denote by $(x_{0}: x_{1}: x_{2}: x_{3}: x_{4})$ the homogeneous coordinates on $\mathbb{P}^{4}$ and $p_{i,j}, 0\leq i < j \leq 4$ the Plücker coordinates of the grassmannian of lines $\mathbb{G}(1,4)\subset \mathbb{P}^{9}$. Throughout this paper, we locally study the Fano surface of lines $\F(X)$. For instance when we work in the affine chart $p_{0,1}=1$ of $\mathbb{G}(1,4)$ isomorphic to $\mathbb{C}^{6}$, a point $[\ell]\in\F(X)\subset\mathbb{G}(1,4)$ corresponds to a line on $X$ spanned by two points $v_{0}=(1:0:-p_{1,2}:-p_{1,3}:-p_{1,4})$ and $v_{1}=(0:1:p_{0,2}:p_{0,3}:p_{0,4})$.
We take $(p_{0,2}, p_{0,3}, p_{0,4}, p_{1,2}, p_{1,3}, p_{1,4})$ as the local
coordinates of $\mathbb{G}(1,4)$ on the affine chart $p_{0,1}=1$
(see \cite{boissiere2019cubic}).\newline
Let $\ell$ be a line on $X$. There are two types of lines in $X$: lines with $\mathcal{N}_{\ell/ X}\simeq \mathcal{O}_{\ell}\oplus \mathcal{O}_{\ell}$ called lines of the first type and those with $\mathcal{N}_{\ell/ X}\simeq \mathcal{O}_{\ell}(1)\oplus \mathcal{O}_{\ell}(-1)$ called lines of the second type \cite{clemens1972intermediate}. Another description is given by the following proposition: 
\begin{prop}\cite{clemens1972intermediate}
The line $\ell\subset X$ is of the second type if and only if there exists a unique 2-plane $P\supset \ell$ tangent to $X$ at every point
of $\ell$. If $\ell\subset X$ is a line of the first type, then there is no 2-plane tangent to $X$ in all points of $\ell$.
\end{prop}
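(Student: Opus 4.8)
The plan is to read off the splitting type of $\mathcal{N}_{\ell/X}$ from the three quadratic forms that cut out the tangent hyperplanes along $\ell$, and to match that datum with the dimension of the intersection of those hyperplanes. First I would choose homogeneous coordinates so that $\ell=\{x_{2}=x_{3}=x_{4}=0\}$ and write $X=\{F=0\}$. Since $F$ vanishes identically on $\ell$, the partials $\partial_{0}F$ and $\partial_{1}F$ restrict to $0$ on $\ell$, while $q_{i}:=\partial_{i}F|_{\ell}$ for $i=2,3,4$ are quadratic forms in $x_{0},x_{1}$, i.e.\ sections of $\mathcal{O}_{\ell}(2)$. Smoothness of $X$ forces $q_{2},q_{3},q_{4}$ to have no common zero on $\ell$, since such a zero would be a singular point of $X$. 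I would then invoke the normal bundle sequence
\[
0\longrightarrow \mathcal{N}_{\ell/X}\longrightarrow \mathcal{N}_{\ell/\mathbb{P}^{4}}\cong\mathcal{O}_{\ell}(1)^{\oplus 3}\xrightarrow{\ (q_{2},q_{3},q_{4})\ }\mathcal{N}_{X/\mathbb{P}^{4}}\big|_{\ell}\cong\mathcal{O}_{\ell}(3)\longrightarrow 0,
\]
whose last map is surjective precisely because the $q_{i}$ have no common zero.

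Next I would twist this sequence by $\mathcal{O}_{\ell}(-1)$ and pass to cohomology. Setting $r:=\dim_{\mathbb{C}}\langle q_{2},q_{3},q_{4}\rangle\subseteq H^{0}(\ell,\mathcal{O}_{\ell}(2))$, the induced map $H^{0}(\mathcal{O}_{\ell}^{\oplus 3})\to H^{0}(\mathcal{O}_{\ell}(2))$ sends $(\lambda_{2},\lambda_{3},\lambda_{4})\mapsto \sum_{i}\lambda_{i}q_{i}$ and hence has rank $r$, giving $h^{0}(\mathcal{N}_{\ell/X}(-1))=3-r$. Comparing with the two admissible splitting types $\mathcal{O}_{\ell}\oplus\mathcal{O}_{\ell}$ and $\mathcal{O}_{\ell}(1)\oplus\mathcal{O}_{\ell}(-1)$, for which $h^{0}(\mathcal{N}_{\ell/X}(-1))$ equals $0$ and $1$ respectively, yields the dictionary: $\ell$ is of the first type iff $r=3$, and of the second type iff $r=2$. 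The degenerate value $r\le 1$ is ruled out because it would force a splitting type $\mathcal{O}_{\ell}(a)\oplus\mathcal{O}_{\ell}(-a)$ with $a\ge 2$, contradicting $h^{1}(\mathcal{N}_{\ell/X})=0$, i.e.\ the smoothness of the Fano surface.

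Finally I would translate $r$ into the geometry of tangent planes. A $2$-plane $P\supset\ell$ is tangent to $X$ at a point $x\in\ell$ exactly when $P$ is contained in the embedded tangent hyperplane $\mathbb{T}_{x}X=\{\,y:\ \nabla F(x)\cdot y=0\,\}$; thus $P$ is tangent along all of $\ell$ if and only if $P\subseteq\bigcap_{x\in\ell}\mathbb{T}_{x}X$. The normals $\nabla F(x)=(0,0,q_{2}(x),q_{3}(x),q_{4}(x))$ span, as $x$ ranges over $\ell$, a subspace of dimension $r$ (its span coincides with $\langle q_{2},q_{3},q_{4}\rangle$ read through the coefficient vectors), so $\bigcap_{x\in\ell}\mathbb{T}_{x}X$ is a linear subspace of dimension $4-r$ containing $\ell$. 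When $r=2$ this intersection is a single $2$-plane through $\ell$, which is therefore the unique tangent plane; when $r=3$ it reduces to $\ell$ itself, so no tangent $2$-plane exists. Combined with the dictionary of the previous step, this gives exactly the stated equivalence and the non-existence assertion for the first type.

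I expect the delicate point to be the identification of the surjection in the normal bundle sequence with the triple $(q_{2},q_{3},q_{4})$, together with the verification that its rank $r$ is simultaneously the invariant governing $h^{0}(\mathcal{N}_{\ell/X}(-1))$ and the dimension of $\bigcap_{x\in\ell}\mathbb{T}_{x}X$. Once this single numerical invariant is pinned down on both the cohomological and the geometric side, the equivalence together with the uniqueness and non-existence statements follows formally.
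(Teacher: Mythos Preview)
The paper does not give its own proof of this proposition; it is quoted from Clemens--Griffiths as background, so there is nothing in the paper to compare your argument against. Your proof is correct and is essentially the classical Clemens--Griffiths argument: realise $\mathcal{N}_{\ell/X}$ as the kernel of $(q_{2},q_{3},q_{4})\colon\mathcal{O}_{\ell}(1)^{\oplus 3}\to\mathcal{O}_{\ell}(3)$, read off the splitting type from $r=\dim\langle q_{2},q_{3},q_{4}\rangle$ via $h^{0}(\mathcal{N}_{\ell/X}(-1))=3-r$, and identify $r$ with the codimension of $\bigcap_{x\in\ell}\mathbb{T}_{x}X$.

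One minor point: your exclusion of $r\le 1$ by invoking smoothness of the Fano surface works, but is heavier than necessary and makes the argument depend on a result at least as deep as the one being proved. If $r\le 1$ then $q_{2},q_{3},q_{4}$ are proportional to a single binary quadratic, and any zero of that quadratic on $\ell\cong\mathbb{P}^{1}$ is a common zero of all three, contradicting the smoothness of $X$ directly. This keeps the proof self-contained.
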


\section{Lines on a cubic threefold}

Consider the intersection $P\cap X$ of the cubic threefold $X$ with a plane $P\simeq \mathbb{P}^{2}$ containing $\ell$. We observe that $P\cap X = \ell \cup C$ where $C$ is a conic. It may happen that the conic degenerates, that is $P\cap X = \ell\cup \ell^{'}\cup \ell^{''}$. If $P\cap X = 2\ell \cup \ell^{'}$ then we say the plane $P$ is tangent to $X$ at every point of $\ell$ \cite{tjurin1971geometry}. The line $\ell$ is called a multiple line and $\ell^{'}$ the residual line. We have in particular two cases: if $\ell\neq \ell^{'}$ then $\ell$ is called a double line, and if $\ell=\ell^{'}$, that is $P\cap X=3\ell$, then $\ell$ is called a triple line. The mutiple lines on $X$ are exactly the lines of the second type \cite{murre1972algebraic}.

\subsection{Lines of the second type}

\begin{definition}
Lines $\ell\subset X$ for which there exists a unique 2-plane $P\supset\ell$ such that $P\cap X = 2\ell \cup \ell^{'}$ are called lines of the second type.
\end{definition}

\begin{prop} \cite{murre1972algebraic}
The locus $$\M(X)=\lbrace [\ell]\in \F(X),~\exists~P\simeq\mathbb{P}^{2}~\vert~P\cap X=2\ell \cup \ell^{'}\rbrace$$
of lines of the second type on $X$ is a curve.
\end{prop}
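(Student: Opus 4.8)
The plan is to realize $\M(X)$ as the zero locus of a single section of a line bundle on the smooth surface $\F(X)$, so that it is automatically a divisor, and then to rule out the two degenerate possibilities (empty, or all of $\F(X)$). Write $\mathbb{P}^{4}=\mathbb{P}(\mathbb{C}^{5})$ and let $0\to S\to\mathcal{O}_{\F(X)}^{\oplus 5}\to Q\to 0$ be the restriction to $\F(X)$ of the tautological sequence on $\mathbb{G}(1,4)$, where $S$ has rank $2$, $Q$ has rank $3$, and the fibre $S_{\ell}\subset\mathbb{C}^{5}$ at $[\ell]$ is the plane spanning $\ell$. For a cubic form $f$ defining $X$, the gradient assigns to each $w\in\mathbb{C}^{5}$ the binary quadric $Q_{w}:=\sum_{i}w_{i}\,(\partial_{i}f)|_{\ell}\in\mathrm{Sym}^{2}S_{\ell}^{*}=H^{0}(\ell,\mathcal{O}_{\ell}(2))$. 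Differentiating $f\big((s+\varepsilon)v_{0}+tv_{1}\big)\equiv 0$ in $\varepsilon$ and using the analogous relation for $v_{1}$ shows, via Euler's identity, that $Q_{v_{0}}=Q_{v_{1}}=0$; hence $w\mapsto Q_{w}$ kills $S_{\ell}$ and descends to a morphism of rank-$3$ bundles $\bar{\mu}\colon Q\to\mathrm{Sym}^{2}S^{*}$ on $\F(X)$.

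Next I would identify $\M(X)$ with the degeneracy locus of $\bar{\mu}$ through the normal bundle sequence $0\to\mathcal{N}_{\ell/X}\to\mathcal{N}_{\ell/\mathbb{P}^{4}}\to\mathcal{N}_{X/\mathbb{P}^{4}}|_{\ell}\to 0$, that is $0\to\mathcal{N}_{\ell/X}\to\mathcal{O}_{\ell}(1)^{\oplus 3}\xrightarrow{g}\mathcal{O}_{\ell}(3)\to 0$, whose middle map $g$ is fibrewise multiplication by the quadrics $Q_{w}$. When $\bar{\mu}_{\ell}$ is an isomorphism the kernel is balanced, $\mathcal{N}_{\ell/X}\simeq\mathcal{O}_{\ell}^{\oplus 2}$; when $\bar{\mu}_{\ell}$ drops rank, a nonzero $\bar{w}\in Q_{\ell}$ with $Q_{\bar{w}}=0$ yields a trivial sub-line-bundle of $Q$ and hence an inclusion $\mathcal{O}_{\ell}(1)\hookrightarrow\mathcal{N}_{\ell/\mathbb{P}^{4}}$ landing in $\ker g=\mathcal{N}_{\ell/X}$, which forces $\mathcal{N}_{\ell/X}\simeq\mathcal{O}_{\ell}(1)\oplus\mathcal{O}_{\ell}(-1)$ since the degree is $0$ (using that every bundle on $\mathbb{P}^{1}$ splits). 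Thus $\ell$ is of the second type exactly when $\det\bar{\mu}_{\ell}=0$, so $\M(X)=Z(\det\bar{\mu})$ is cut out by the section $\det\bar{\mu}\in H^{0}\big(\F(X),\,(\det Q)^{*}\otimes\det\mathrm{Sym}^{2}S^{*}\big)$.

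Since $\F(X)$ is a smooth irreducible surface, the vanishing of a nonzero section of a line bundle is an effective Cartier divisor, hence pure of dimension $1$; so only the two extreme cases remain. That $\det\bar{\mu}\not\equiv 0$, i.e.\ that the generic line is of the first type, is the heart of the argument: I would check it either by exhibiting one line along which the three residual quadrics are linearly independent, or, in Murre's local coordinates $(p_{0,2},\dots,p_{1,4})$, by writing $\det\bar{\mu}$ explicitly and verifying it does not lie in the ideal of $\F(X)$. Nonemptiness — that second-type lines really occur — then follows because $\mathcal{L}=(\det Q)^{*}\otimes\det\mathrm{Sym}^{2}S^{*}$ has nonzero effective first Chern class (a Chern-class computation on $\F(X)$, consistent with Clemens–Griffiths), so $[\M(X)]=c_{1}(\mathcal{L})\neq 0$ and a nonzero section must vanish somewhere.

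The step I expect to be the main obstacle is precisely the non-vanishing $\det\bar{\mu}\not\equiv 0$: one must guarantee that the polar map $\bar{\mu}$ is generically an isomorphism, which is exactly where the smoothness of $X$ enters (a singular $X$ would destroy this genericity), and it is the point that a bare dimension count overlooks. Once this is secured, the conclusion that $\M(X)$ is a nonempty curve is formal, and this same degeneracy description of $\M(X)$ is what will later let one read off its singularities.
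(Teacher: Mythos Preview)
The paper does not give a self-contained proof of this proposition --- it is quoted from Murre --- but the paragraph following it derives the local equation $\m(\ell)=0$ (equation~(\ref{det})) by restricting the partials $\partial F/\partial x_i$ to $\ell$ and asking that they be linearly dependent as binary quadrics. Your global construction is precisely the intrinsic version of that computation: in the paper's chart your $\bar\mu\colon Q\to\mathrm{Sym}^2 S^*$ is the $3\times 3$ matrix $(\phi_i^{j,k}(\ell))$, and $\det\bar\mu=\m(\ell)$. So the core idea is the same --- $\M(X)$ is cut out by a single equation on the smooth surface $\F(X)$ --- with you packaging it as a section of a line bundle and the paper writing it in coordinates.

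Where your treatment goes further than the paper's discussion is in naming the two residual points (that $\det\bar\mu\not\equiv 0$ and that $\M(X)\neq\varnothing$) and sketching how to handle them. Both sketches are correct. For the first, smoothness of $X$ is exactly what prevents the three quadrics $\partial F/\partial x_i|_\ell$ from all being proportional, and exhibiting a single first-type line suffices; for the second, the line bundle $(\det Q)^*\otimes\det\mathrm{Sym}^2 S^*$ restricts to $\mathcal{O}_{\F(X)}(2)=2K_{\F(X)}$, which is ample, so a nonzero section must vanish. One minor simplification: to see $Q_{v_0}=Q_{v_1}=0$ you only need to differentiate the identity $f(sv_0+tv_1)\equiv 0$ in $s$ and $t$; invoking Euler is unnecessary.
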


On the affine chart $p_{0,1}=1$, for any point $p\in\ell\subset X$ with coordinates $t_{0}v_{0}+t_{1}v_{1}$ the Fano surface $\F(X)$ is defined by the decomposition
$F(p)=\displaystyle\sum_{i+j=3}t_{0}^{i}t_{1}^{j}\phi^{i,j}(\ell)$ 
where $\phi^{i,j}(\ell)$  are functions of the local Plücker coordinates. Following \cite{boissiere2019cubic}, any plane $P\simeq\mathbb{P}^{2}$ containing $\ell$ meets the plane $P_{0,1}=\lbrace x_{0}=0, x_{1}=0\rbrace$ at a unique point $v_{2}=(0:0:\alpha_{2}:\alpha_{3}:\alpha_{4})$ so that $P={\rm{span}}(\ell,v_{2})$. Then the plane cubic $P\cap X$ has equation $F(t_{0}v_{0}+t_{1}v_{1}+t_{2}v_{2})=0$ where $(v_{0}:v_{1}:v_{2})$ are the projective coordinates of the plane $P$. Expanding in $t_{2}$ we have:
\begin{eqnarray*}
0 &=& F(t_{0}v_{0}+t_{1}v_{1})+t_{2} \sum_{i=2}^{4}\dfrac{\partial F}{\partial x_{i}}(t_{0}v_{0}+t_{1}v_{1})\alpha_{i}\\
&+&\dfrac{1}{2}t_{2}^{2}\sum_{2\leq i,j\leq 4}\dfrac{\partial^{2} F}{\partial x_{j}\partial x_{i} }(t_{0}v_{0}+t_{1}v_{1})\alpha_{i}\alpha_{j}+t_{2}^{3}F(v_{2}).
\end{eqnarray*}
The line $\ell\subset P$ of equation $t_{2}=0$ is a line of the second type on $X$ if and only if $F(t_{0}v_{0}+t_{1}v_{1})=0$ and the plane cubic equation is a multiple of $t_{2}^{2}$. For any point $p\in\ell\subset X$ with coordinates $t_{0}v_{0}+t_{1}v_{1}$ on the affine chart $p_{0,1}=1$, one can write 
\begin{equation}\label{NUM1}
\dfrac{\partial F}{\partial x_{i}}(p)=\sum_{j+k=2}t_{0}^{j}t_{1}^{k}\phi_{i}^{j,k}(\ell)
\end{equation}
where $\phi_{i}^{j,k}(\ell)$ are functions of the local Plücker coordinates. Then the line $\ell\subset X$ of equation $t_{2}=0$ is a second type line if and only if
\begin{equation}\label{det}
\det \begin{pmatrix}
\phi^{2,0}_{2}(\ell)& \phi^{2,0}_{3}(\ell) & \phi^{2,0}_{4}(\ell) \\ 
\phi^{1,1}_{2}(\ell) & \phi^{1,1}_{3}(\ell) & \phi^{1,1}_{4}(\ell) \\ 
\phi^{0,2}_{2}(\ell) & \phi^{0,2}_{3}(\ell) & \phi^{0,2}_{4}(\ell) \\
\end{pmatrix}=\m(\ell)=0.
\end{equation}
The curve of lines of the second type on $X$ is thus locally given by $$\M(X)=\lbrace \phi^{3,0}(\ell)=0, \phi^{2,1}(\ell)=0, \phi^{1,2}(\ell)=0, \phi^{0,3}(\ell)=0, \m(\ell)=0\rbrace.$$

\subsection{Triple lines}

\begin{definition}
Lines $\ell\subset X$ of the second type so that $P\cap X = 3\ell$ are called triple lines.
\end{definition}

To our knowledge, the locus of triple lines on a cubic threefold is shortly mentioned in the literature. We have the following lemma.
\begin{lem}\cite{clemens1972intermediate}
The locus $\lbrace [\ell]\in \F(X),~\exists~P\simeq\mathbb{P}^{2}~\vert~P\cap X=3\ell\rbrace$
of triple lines on $X$ defines a finite set.
\end{lem}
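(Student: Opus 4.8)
The plan is to exhibit the triple lines as the zero locus, inside the curve $\M(X)$, of two algebraic functions of the Plücker coordinates, and then to check that this locus is a proper closed subset of $\M(X)$. Since $\M(X)$ is a curve by the proposition above, a closed subset that contains no irreducible component is automatically finite, so the whole statement reduces to verifying that not every line of the second type is a triple line. I work throughout in the affine chart $p_{0,1}=1$ with the local coordinates and notation of Section 2.

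First I make the tangent plane explicit. For $[\ell]\in\M(X)$ the vanishing of the coefficient of $t_2$ in the expansion of $F(t_0 v_0+t_1 v_1+t_2 v_2)$ means that $\sum_{i=2}^{4}\frac{\partial F}{\partial x_i}(t_0 v_0+t_1 v_1)\,\alpha_i$ is identically zero in $(t_0,t_1)$, which by \eqref{NUM1} is equivalent to $(\alpha_2,\alpha_3,\alpha_4)$ lying in the kernel of the matrix appearing in \eqref{det}. As $\ell$ is of the second type, this matrix has rank exactly $2$ --- the tangent plane $P$ is unique by the Clemens--Griffiths proposition recalled above --- so $(\alpha_2,\alpha_3,\alpha_4)$ is determined up to a scalar and depends algebraically on $\ell$, and $P=\mathrm{span}(\ell,v_2)$.

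Next I read off the residual line. Using the expansion of Section 3 together with $F(t_0 v_0+t_1 v_1)=0$ (since $\ell\subset X$) and the tangency just discussed (the $t_2$-coefficient vanishes), the plane cubic collapses to
\begin{equation*}
F(t_0 v_0+t_1 v_1+t_2 v_2)=t_2^{2}\bigl(C(t_0,t_1)+t_2\,D\bigr),
\end{equation*}
where $C(t_0,t_1)=\tfrac{1}{2}\sum_{2\le i,j\le 4}\frac{\partial^{2}F}{\partial x_i\partial x_j}(t_0 v_0+t_1 v_1)\,\alpha_i\alpha_j$ is a homogeneous linear form in $(t_0,t_1)$ and $D=F(v_2)$ is a constant. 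Thus $P\cap X=2\ell\cup\ell'$ with $\ell'=\{C(t_0,t_1)+t_2 D=0\}$, and $P\cap X=3\ell$ holds exactly when $\ell'=\ell=\{t_2=0\}$, i.e. when the two linear forms are proportional. Writing $C(t_0,t_1)=c_0(\ell)t_0+c_1(\ell)t_1$, this happens if and only if $c_0(\ell)=c_1(\ell)=0$; note that $C\equiv0$ forces $D\neq0$, since otherwise $F$ would vanish on all of $P$, which is impossible because a smooth cubic threefold contains no $2$-plane. Hence the triple lines are precisely $\{[\ell]\in\M(X):c_0(\ell)=c_1(\ell)=0\}$, a closed subset of $\M(X)$ cut out by two functions algebraic in the local Plücker coordinates.

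The remaining and main point is to show that this subset is proper, equivalently that $c_0$ and $c_1$ do not both vanish identically along any irreducible component of $\M(X)$. I would establish this by producing, on each component, a single line of the second type whose residual line $\ell'$ is distinct from $\ell$ --- either through a direct computation on an explicit cubic (the companion computation on the Fermat cubic $F_4$, which yields only $135$ triple lines, already illustrates that $\M(F_4)$ is not entirely made of triple lines), or, for a general smooth $X$, by a Jacobian/dimension argument showing that imposing $c_0=c_1=0$ genuinely drops the dimension below that of $\M(X)$. Granting this, the common zero locus of $c_0$ and $c_1$ meets the curve $\M(X)$ in a finite set, which is the assertion of the lemma. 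I expect this properness step to be the crux, since everything before it is an explicit rewriting, whereas ruling out a one-parameter family of triple lines is where the geometry of $X$ genuinely enters.
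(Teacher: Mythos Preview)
The paper does not actually prove this lemma: it is stated with a citation to Clemens--Griffiths and used as background, so there is no ``paper's own proof'' to compare against. That said, your argument is incomplete precisely where you yourself flag it.

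Everything up to the display $F(t_0v_0+t_1v_1+t_2v_2)=t_2^{2}(C(t_0,t_1)+t_2D)$ is correct and matches the paper's local description (your $c_0,c_1$ are the entries of the left-hand side of the paper's equation \eqref{droitetriple} once the tangent direction is normalized). The issue is the final step: you reduce the lemma to the assertion that $c_0,c_1$ do not vanish identically on any irreducible component of $\M(X)$, and then do not prove it. The Fermat example handles only one cubic, and saying you ``would'' run a Jacobian/dimension argument is not a proof; you need an argument valid for every smooth $X$ and every component of $\M(X)$. As written, nothing you have said rules out a one-parameter family of triple lines, so the lemma is not established.

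If you want a route that stays within the paper's framework, note that the paper's Theorem~\ref{th1} (proved independently of this lemma) identifies the triple lines with the singular locus of $\M(X)$, and the same local computation shows $\M(X)$ is reduced; the singular locus of a reduced curve is finite, which gives the lemma as a corollary. Alternatively, a clean direct argument is to observe that a triple line together with its tangent $2$-plane is a flag $(\ell\subset P)$ with $P\cap X=3\ell$; such flags are parametrized by a subscheme of the flag variety cut out by conditions whose expected dimension is zero, and one checks (for instance via the normal form $F=x_2q_2+x_3q_3+kx_4^{3}$ with $k\neq 0$ in the paper's Remark) that the scheme is indeed zero-dimensional because the tangent plane at a triple line is rigid. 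Either of these would close the gap; what you have now does not.
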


The line of the second type $\ell\subset X$ of equation $t_{2}=0$ is a triple line if and only if the plane cubic equation is a multiple of $t_{2}^{3}$. For all point $p\in\ell\subset X$ with coordinates $t_{0}v_{0}+t_{1}v_{1}$ one can write 
\begin{equation}\label{NUM2}
\dfrac{\partial^{2}F}{\partial x_{i}\partial x_{j}}(p)=\sum_{k+l=1}t_{0}^{k}t_{1}^{l}\phi_{i,j}^{k,l}(\ell)
\end{equation}
where $\phi_{i,j}^{k,l}(\ell)$ are functions of the local Plücker coordinates on the affine chart $p_{0,1}=1$. Then the line of the second type $\ell\subset X$ of equation $t_{2}=0$ is a triple line if and only if
\begin{equation}\label{droitetriple}
\begin{pmatrix}
\dfrac{1}{2}\phi_{2,2}^{1,0}(\ell) & \dfrac{1}{2}\phi_{3,3}^{1,0}(\ell) & \dfrac{1}{2} \phi_{4,4}^{1,0}(\ell)  & \phi^{1,0}_{2,3}(\ell) & \phi^{1,0}_{2,4}(\ell) & \phi^{1,0}_{3,4}(\ell) \\ 
\\
\dfrac{1}{2}\phi_{2,2}^{0,1}(\ell) & \dfrac{1}{2}\phi_{3,3}^{0,1}(\ell) & \dfrac{1}{2} \phi_{4,4}^{0,1}(\ell) & \phi^{0,1}_{2,3}(\ell) & \phi^{0,1}_{2,4}(\ell) & \phi^{0,1}_{3,4}(\ell) 
\end{pmatrix} 
\begin{pmatrix}
\alpha_{2}^{2}\\
\alpha_{3}^{2}\\
\alpha_{4}^{2}\\
\alpha_{2}\alpha_{3}\\
\alpha_{2}\alpha_{4}\\
\alpha_{3}\alpha_{4}\\
\end{pmatrix} 
=0
\end{equation}
holds.

\section{Triple lines on a cubic threefold and the curve of lines of the second type}

In this section we prove the following theorem by using the techniques of \cite{murre1972algebraic}.\Triplelines*

\begin{proof}

Let us start by fixing the notations. We can assume that $[\ell_{0}]\in\M(X)$ is a line of the second type given by $$x_{2}=0, x_{3}=0, x_{4}=0.$$ On the affine chart $p_{0,1}=1$ of $\mathbb{G}(1,4)$ the line $[\ell_{0}]\in\M(X)$ is thus given by 
$$p_{0,2}=0,p_{0,3}=0,p_{0,4}=0,p_{1,2}=0,p_{1,3}=0,p_{1,4}=0.$$
We may assume after a linear change of variables $x_{2},x_{3}$ and $x_{4}$ that the 2-plane $P={\rm{span}}(\ell_{0},v_{2})$ tangent to $X$ at every point of $\ell_{0}$ is such that $\alpha_{2}=0, \alpha_{3}=0$ and $\alpha_{4}=1$. Then from the following equation
\begin{equation*}
\begin{pmatrix}
\phi^{2,0}_{2}(\ell_{0})& \phi^{2,0}_{3}(\ell_{0}) & \phi^{2,0}_{4}(\ell_{0}) \\ 
\phi^{1,1}_{2}(\ell_{0}) & \phi^{1,1}_{3}(\ell_{0}) & \phi^{1,1}_{4}(\ell_{0}) \\ 
\phi^{0,2}_{2}(\ell_{0}) & \phi^{0,2}_{3}(\ell_{0}) & \phi^{0,2}_{4}(\ell_{0}) \\
\end{pmatrix} 
\begin{pmatrix}
\alpha_{2}\\
\alpha_{3}\\
\alpha_{4}
\end{pmatrix} 
=\begin{pmatrix}
0 \\ 
0\\ 
0\\
\end{pmatrix}
\end{equation*}
we get
\begin{equation}\label{eq:phi4}
\phi_{4}^{2,0}(\ell_{0})=0, \phi_{4}^{1,1}(\ell_{0})=0, \phi_{4}^{0,2}(\ell_{0})=0.
\end{equation}
The proof goes as follows: we first compute the affine tangent space $T_{[\ell_{0}]}\F(X)$ of $\F(X)$ at the point $[\ell_{0}]$. Then we compute the affine tangent space $T_{[\ell_{0}]}\M(X)$ of $\M(X)$ at the point $[\ell_{0}]$. Afterwards we prove that the triple lines on $X$ are singular points of the curve $\M(X)$. Finally we prove that the singular points of the curve $\M(X)$ correspond to triple lines on $X$.\newline
\item[1.] \textit{Tangent space of $\F(X)$ at the point $[\ell_{0}]$.}
On the affine chart $p_{0,1}=1$, a point in $\mathbb{C}^{6}$ with coordinates $(p_{i,j})_{i,j}$ belongs to the tangent space $T_{[\ell_{0}]}\F(X)$ if and only if $\displaystyle\sum_{i=2}^{4} (-p_{1,i}t_{0}+p_{0,i}t_{1})\left(t_{0}^{2}\phi^{2,0}_{i}(\ell_{0})+t_{0}t_{1}\phi^{1,1}_{i}(\ell_{0})+t_{1}^{2}\phi^{0,2}_{i}(\ell_{0})\right)=0$ for all $(t_{0}:t_{1})\in\mathbb{P}^{1}$. Setting the coefficients of $t_{0}^{3}, t_{0}^{2}t_{1}, t_{0}t_{1}^{2}$ and $t_{1}^{3}$ each equal to zero one can see that for such a point the equality 
\begin{equation*}
\underbrace{
\begin{pmatrix}
0 & 0 & \phi_{2}^{2,0}(\ell_{0}) & \phi_{3}^{2,0}(\ell_{0})  \\ 
\phi_{2}^{2,0}(\ell_{0}) & \phi_{3}^{2,0}(\ell_{0})  & \phi_{2}^{1,1}(\ell_{0}) & \phi_{3}^{1,1}(\ell_{0}) \\
\phi_{2}^{1,1}(\ell_{0}) & \phi_{3}^{1,1}(\ell_{0})  & \phi_{2}^{0,2}(\ell_{0}) & \phi_{3}^{0,2}(\ell_{0})  \\
\phi_{2}^{0,2}(\ell_{0}) & \phi_{3}^{0,2}(\ell_{0})  & 0 & 0 
\end{pmatrix} 
}_{ A}
\begin{pmatrix}
p_{0,2} \\ 
p_{0,3} \\ 
-p_{1,2} \\ 
-p_{1,3} \\ 
\end{pmatrix} 
=\begin{pmatrix}
0 \\ 
0 \\ 
0 \\ 
0\\
\end{pmatrix}
\end{equation*}
holds (see Equation \eqref{eq:phi4}). By swapping two columns of the matrix $A$ we get the resultant
$$\det\begin{pmatrix}
\phi_{2}^{2,0}(\ell_{0}) & 0 & \phi_{3}^{2,0}(\ell_{0}) & 0  \\ 
\phi_{2}^{1,1}(\ell_{0}) & \phi_{2}^{2,0}(\ell_{0})  & \phi_{3}^{1,1}(\ell_{0}) & \phi_{3}^{2,0}(\ell_{0}) \\
\phi_{2}^{0,2}(\ell_{0}) & \phi_{2}^{1,1}(\ell_{0})  & \phi_{3}^{0,2}(\ell_{0}) & \phi_{3}^{1,1}(\ell_{0})  \\
0 & \phi_{2}^{0,2}(\ell_{0})  & 0 & \phi_{3}^{0,2}(\ell_{0})\\
\end{pmatrix}. $$
For any point $p_{0}\in\ell_{0}\subset X$ with coordinates $(t_{0}:t_{1}:0:0:0)$ in the affine chart $p_{0,1}=1$ the polynomials $\dfrac{\partial F}{\partial x_{i}}(p_{0})=\displaystyle\sum_{j+k=2}t_{0}^{j}t_{1}^{k}\phi_{i}^{j,k}(\ell_{0})$ vanish for $i\in\lbrace 0,1\rbrace$, and using Equation \eqref{eq:phi4} so also does $\dfrac{\partial F}{\partial x_{4}}(p_{0})=\displaystyle\sum_{j+k=2}t_{0}^{j}t_{1}^{k}\phi_{4}^{j,k}(\ell_{0})$. Therefore the equations $\dfrac{\partial F}{\partial x_{i}}(p_{0})=\displaystyle\sum_{j+k=2}t_{0}^{j}t_{1}^{k}\phi_{i}^{j,k}(\ell_{0})=0$ do not have a common root for $i\in\lbrace 2,3\rbrace$, otherwise the cubic threefold $X$ would be singular. Thus 
\begin{equation}\label{resultant}
\det(A)\neq 0
\end{equation}
and the tangent space to $\F(X)$ at the point $[\ell_{0}]$ is given by
\begin{equation*}
T_{[\ell_{0}]}\F(X)=\lbrace p_{0,2}=0,~p_{0,3}=0,~p_{1,2}=0,~p_{1,3}=0\rbrace
\end{equation*}
in the affine chart $p_{0,1}=1$.\newline
\item[2.] \textit{Tangent space of $\M(X)$ at the point $[\ell_{0}]$.}
The tangent space $T_{[\ell_{0}]}\M(X)$ is given by the intersection of the tangent space $T_{[\ell_{0}]}\F(X)$ with the hypersurface defined by $$\displaystyle\sum_{0\leq i < 2 \leq j\leq 4}p_{i,j}\dfrac{\partial \m}{\partial p_{i,j}}(\ell_{0})=0.$$ We differentiate the determinant in \eqref{det} with respect to $p_{i,j}$ and by using Equation \eqref{eq:phi4} we get
\begin{equation}\label{p04}
\det
\begin{pmatrix}
\phi^{2,0}_{2}(\ell_{0})&  \phi^{2,0}_{3}(\ell_{0}) & p_{0,4}\dfrac{\partial \phi^{2,0}_{4}}{\partial p_{0,4}}(\ell_{0}) + p_{1,4}\dfrac{\partial \phi^{2,0}_{4}}{\partial p_{1,4}}(\ell_{0})  \\ 
\phi^{1,1}_{2}(\ell_{0}) & \phi^{1,1}_{3}(\ell_{0}) & p_{0,4}\dfrac{\partial \phi^{1,1}_{4}}{\partial p_{0,4}}(\ell_{0}) + p_{1,4}\dfrac{\partial \phi^{1,1}_{4}}{\partial p_{1,4}}(\ell_{0})  \\ 
\phi^{0,2}_{2}(\ell_{0}) & \phi^{0,2}_{3}(\ell_{0})& p_{0,4}\dfrac{\partial \phi^{0,2}_{4}}{\partial p_{0,4}}(\ell_{0}) + p_{1,4}\dfrac{\partial \phi^{0,2}_{4}}{\partial p_{1,4}}(\ell_{0})\\
\end{pmatrix}
= 0.
\end{equation}
Let us explicit Equation (\ref{p04}). For any point $p\in\ell\subset X$ with coordinates $t_{0}v_{0}+t_{1}v_{1}$ in the affine chart $p_{0,1}=1$ one can write $F(p)=F(t_{0}v_{0}+t_{1}v_{1})$.
Differentiate Equation \eqref{NUM1} with respect to $p_{u,4}$ for $u\in\lbrace 0,1\rbrace$ and using Equation \eqref{NUM2} we have
\begin{equation*}
\sum_{j+k=2}t_{0}^{j}t_{1}^{k}\dfrac{\partial \phi_{4}^{j,k}}{\partial p_{u,4}}(\ell)=(-1)^{u}t_{0}^{u}t_{1}^{1-u}\sum_{k+l=1}t_{0}^{k}t_{1}^{l}\phi_{4,4}^{k,l}(\ell). 
\end{equation*}
By identifying the homogeneous components in $t_{0}$ and $t_{1}$ we get
\begin{equation*}
\dfrac{\partial \phi^{j,k}_{4}}{\partial p_{u,4}}(\ell)=\phi_{4,4}^{j,k-1}(\ell),\quad \dfrac{\partial \phi^{j,k}}{\partial p_{u,4}}(\ell)=-\phi_{4,4}^{j-1,k}(\ell)
\end{equation*}
with $\phi_{4,4}^{2,-1}(\ell)=0, \phi_{4,4}^{-1,2}(\ell)=0$.
Then Equation \eqref{p04} becomes
\begin{equation*}
\det
\begin{pmatrix}
\phi^{2,0}_{2}(\ell_{0})&  \phi^{2,0}_{3}(\ell_{0}) &  -p_{1,4}\phi_{4,4}^{1,0}(\ell_{0})  \\ 
\phi^{1,1}_{2}(\ell_{0}) & \phi^{1,1}_{3}(\ell_{0}) & p_{0,4}\phi_{4,4}^{1,0}(\ell_{0}) - p_{1,4} \phi_{4,4}^{0,1}(\ell_{0})  \\ 
\phi^{0,2}_{2}(\ell_{0}) & \phi^{0,2}_{3}(\ell_{0})& p_{0,4}\phi_{4,4}^{0,1}(\ell_{0}) \\
\end{pmatrix}= \phi(p_{0,4},p_{1,4}) = 0.
\end{equation*}
Therefore the tangent space of the curve $\M(X)$ at the point $[\ell_{0}]$ is given by
$$T_{[\ell_{0}]}\M(X) = \lbrace p_{0,2}=0,p_{0,3}=0,p_{1,2}=0,p_{1,3}=0,\phi(p_{0,4},p_{1,4}) = 0\rbrace$$ in the affine chart $p_{0,1}=1$. We now prove that the triple lines on $X$ are singular points of $\M(X)$.\newline
\item[3.] \textit{Triple lines and the curve $\M(X)$ of lines of the second type.} 
From Equation \eqref{droitetriple} the double line $[\ell_{0}]\in\M(X)$ is a triple line if and only if
\begin{equation*}
\left\{
    \begin{array}{ll}
        \phi_{4,4}^{1,0}(\ell_{0})=0 &  \\
        \phi_{4,4}^{0,1}(\ell_{0})=0 & 
    \end{array}
\right.
\end{equation*}
which implies that $\phi(p_{0,4},p_{1,4})=0$. Hence for $[\ell_{0}]\in\M(X)$ a triple line, the tangent space of the curve $\M(X)$ at the point $[\ell_{0}]$ is given by $$T_{[\ell_{0}]}\M(X) = \lbrace p_{0,2}=0, p_{0,3}=0, p_{1,2}=0, p_{1,3}=0\rbrace$$ in the affine chart $p_{0,1}=1$. Its dimension is strictly greater than one. Therefore the triple lines on $X$ are singular points of $\M(X)$.\newline
\item[4.] \textit{Singular points of the curve $\M(X)$ of lines of the second type.}
We have seen that the curve of lines of the second type is given by 
$\M(X)=\lbrace \phi^{3,0}(\ell)=0, \phi^{2,1}(\ell)=0, \phi^{1,2}(\ell)=0, \phi^{0,3}(\ell)=0, \m(\ell)=0\rbrace$ in the affine chart $p_{0,1}=1$. We are going to compute the Jacobian matrix of this curve at the point $[\ell_{0}]$. Keeping notation as above, we have for any point $p\in\ell\subset X$ with coordinates $t_{0}v_{0}+t_{1}v_{1}$:
\begin{equation}\label{NUM4}
F(p)=F(t_{0}v_{0}+t_{1}v_{1})=\displaystyle\sum_{i+j=3}t_{0}^{i}t_{1}^{j}\phi^{i,j}(\ell).
\end{equation}
Differentiating Equation (\ref{NUM4}) with respect to $p_{u,v}$ for $u\in\lbrace 0,1\rbrace$ and $v\in\lbrace 2,3,4\rbrace$ we get
\begin{equation*}
\displaystyle\sum_{i+j=3}t_{0}^{i}t_{1}^{j}\dfrac{\partial \phi^{i,j}}{\partial p_{u,v}}(\ell)=(-1)^{u}t_{0}^{u}t_{1}^{1-u}\sum_{j+k=2}t_{0}^{j}t_{1}^{k}\phi_{v}^{j,k}(\ell).
\end{equation*}
By identifying the homogeneous components in $t_{0}$ and $t_{1}$ we get
\begin{equation*}
\dfrac{\partial \phi^{i,j}}{\partial p_{0,v}}(\ell)=\phi_{v}^{i,j-1}(\ell),\quad\dfrac{\partial \phi^{i,j}}{\partial p_{1,v}}(\ell)=-\phi_{v}^{i-1,j}(\ell)
\end{equation*}
with $\phi_{v}^{3,-1}(\ell)=0, \phi_{v}^{-1,3}(\ell)=0$. The Jacobian matrix of $\M(X)$ at the point $[\ell_{0}]$ is thus
$$
\begin{pmatrix}
0 & 0 & 0 & -\phi_{2}^{2,0}(\ell_{0}) & -\phi_{3}^{2,0}(\ell_{0}) & 0 \\ 
\phi_{2}^{2,0}(\ell_{0}) & \phi_{3}^{2,0}(\ell_{0}) & 0 & -\phi_{2}^{1,1}(\ell_{0}) & -\phi_{3}^{1,1}(\ell_{0}) & 0  \\
\phi_{2}^{1,1}(\ell_{0}) & \phi_{3}^{1,1}(\ell_{0}) & 0 & -\phi_{2}^{0,2}(\ell_{0}) & -\phi_{3}^{0,2}(\ell_{0}) &  0  \\
\phi_{2}^{0,2}(\ell_{0}) & \phi_{3}^{0,2}(\ell_{0}) & 0 & 0 & 0 & 0 \\
\dfrac{\partial \m}{\partial p_{0,2}}(\ell_{0}) & \dfrac{\partial \m}{\partial p_{0,3}}(\ell_{0}) & \dfrac{\partial \m}{\partial p_{0,4}}(\ell_{0}) & \dfrac{\partial \m}{\partial p_{1,2}}(\ell_{0}) & \dfrac{\partial \m}{\partial p_{1,3}}(\ell_{0}) & \dfrac{\partial \m}{\partial p_{1,4}}(\ell_{0})\\
\end{pmatrix}
$$ 
using again Equation \eqref{eq:phi4}. Denote by $M_{i}$ its $5\times 5$ minors where $i$ corresponds to the omitted column. The minors $M_{1}, M_{2}, M_{4}, M_{5}$ vanish and the singular points of the curve $\M(X)$ are the points $[\ell_{0}]\in\M(X)$ for which $M_{3}=0,M_{6}=0$ with
\begin{eqnarray*}
 M_{3} &=& \dfrac{\partial \m}{\partial p_{1,4}}(\ell_{0})\det
\begin{pmatrix}
0 & 0 & -\phi_{2}^{2,0}(\ell_{0}) & -\phi_{3}^{2,0}(\ell_{0}) \\ 
\phi_{2}^{2,0}(\ell_{0}) & \phi_{3}^{2,0}(\ell_{0}) & -\phi_{2}^{1,1}(\ell_{0}) & -\phi_{3}^{1,1}(\ell_{0}) \\
\phi_{2}^{1,1}(\ell_{0}) & \phi_{3}^{1,1}(\ell_{0}) & -\phi_{2}^{0,2}(\ell_{0}) & -\phi_{3}^{0,2}(\ell_{0})  \\
\phi_{2}^{0,2}(\ell_{0}) & \phi_{3}^{0,2}(\ell_{0}) & 0 & 0 \\ 
\end{pmatrix}\\
M_{6} &=& \dfrac{\partial \m}{\partial p_{0,4}}(\ell_{0})\det 
\begin{pmatrix}
0 & 0 & -\phi_{2}^{2,0}(\ell_{0}) & -\phi_{3}^{2,0}(\ell_{0}) \\ 
\phi_{2}^{2,0}(\ell_{0}) & \phi_{3}^{2,0}(\ell_{0}) & -\phi_{2}^{1,1}(\ell_{0}) & -\phi_{3}^{1,1}(\ell_{0}) \\
\phi_{2}^{1,1}(\ell_{0}) & \phi_{3}^{1,1}(\ell_{0}) & -\phi_{2}^{0,2}(\ell_{0}) & -\phi_{3}^{0,2}(\ell_{0})  \\
\phi_{2}^{0,2}(\ell_{0}) & \phi_{3}^{0,2}(\ell_{0}) & 0 & 0
\end{pmatrix}.       
\end{eqnarray*}
Since the above determinant cannot vanish (see \eqref{resultant}) then one can see that the singular points of $\M(X)$ are the points $[\ell_{0}]$ that satisfy
\begin{equation*}
\left\{
    \begin{array}{ll}
  \dfrac{\partial \m}{\partial p_{1,4}}(\ell_{0}) =\det \begin{pmatrix}
\phi_{2}^{2,0}(\ell_{0}) & \phi_{3}^{2,0}(\ell_{0}) & -\phi_{4,4}^{1,0}(\ell_{0})  \\ 
\phi_{2}^{1,1}(\ell_{0}) & \phi_{3}^{1,1}(\ell_{0}) & -\phi_{4,4}^{0,1}(\ell_{0})  \\
\phi_{2}^{0,2}(\ell_{0}) & \phi_{3}^{0,2}(\ell_{0}) & 0\\
\end{pmatrix} = 0\\     
&\\
\dfrac{\partial \m}{\partial p_{0,4}}(\ell_{0}) =\det \begin{pmatrix}
\phi_{2}^{2,0}(\ell_{0}) & \phi_{3}^{2,0}(\ell_{0}) & 0  \\ 
\phi_{2}^{1,1}(\ell_{0}) & \phi_{3}^{1,1}(\ell_{0}) & \phi_{4,4}^{1,0}(\ell_{0})  \\
\phi_{2}^{0,2}(\ell_{0}) & \phi_{3}^{0,2}(\ell_{0}) & \phi_{4,4}^{0,1}(\ell_{0}) \\ 
\end{pmatrix}=0. \\
\end{array}
\right.
\end{equation*}
So we have
\begin{equation}\label{phi44}
\underbrace{
\begin{pmatrix}
-M_{13} & M_{23}  \\ 
-M_{23} & M_{33} \\ 
\end{pmatrix}
}_{B}
\begin{pmatrix}
\phi_{4,4}^{1,0}(\ell_{0})  \\ 
\phi_{4,4}^{0,1}(\ell_{0}) \\ 
\end{pmatrix}
=
\begin{pmatrix}
0 \\ 
0\\
\end{pmatrix}
\end{equation}
where $M_{13}, M_{23}$ and $M_{33}$ are the $2\times 2$ minors of the matrix 
$$\begin{pmatrix}
\phi^{2,0}_{2}(\ell_{0})& \phi^{2,0}_{3}(\ell_{0}) & \phi^{2,0}_{4}(\ell_{0}) \\ 
\phi^{1,1}_{2}(\ell_{0}) & \phi^{1,1}_{3}(\ell_{0}) & \phi^{1,1}_{4}(\ell_{0}) \\ 
\phi^{0,2}_{2}(\ell_{0}) & \phi^{0,2}_{3}(\ell_{0}) & \phi^{0,2}_{4}(\ell_{0}) \\
\end{pmatrix}. $$ 
This matrix is of rank two for if it was of rank one then  one could find a common root of the equations $\dfrac{\partial F}{\partial x_{i}}(p_{0})=\displaystyle\sum_{j+k=2}t_{0}^{j}t_{1}^{k}\phi_{i}^{j,k}(\ell_{0})=0$ for $i\in\lbrace 2,3,4\rbrace$ with $p_{0}\in\ell_{0}$ a point with coordinates $(t_{0}:t_{1}:0:0:0)$ in the affine chart $p_{0,1}=1$. Let us consider Equation \eqref{phi44}. If $\det(B)=0$ we can assume that a column is a multiple of the other one. Then one could find a common root of $\dfrac{\partial F}{\partial x_{i}}(p_{0}) = \displaystyle\sum_{j+k=2}t_{0}^{j}t_{1}^{k}\phi_{i}^{j,k}(\ell_{0})=0$ for $i\in\lbrace 2,3\rbrace$ and $X$ would be singular using Equation \eqref{eq:phi4}. Hence $\det(B)\neq 0$ and Equation \eqref{phi44} holds if and only if $\left(\phi_{4,4}^{1,0}(\ell_{0})=0,\phi_{4,4}^{0,1}(\ell_{0})\right)$ vanishes, which are the necessary and sufficient conditions for the line $[\ell_{0}]\in \M(X)$ to be a triple line. Therefore the singular points of $\M(X)$ are the triple lines on $X$. 
\end{proof}

As a consequence of Theorem \ref{th1} one can deduce that the curve $\M(X)$ of lines of the second type is reduced.

\begin{remark}
Let $\ell_{0}$ be a line on $X$ given by $x_{2}=0, x_{3}=0, x_{4}=0$. Then the equation of $X$ may be written 
\begin{equation*}
F(x_{0},...,x_{4}) = x_{2}q_{2}(x_{0},...,x_{4}) + x_{3}q_{3}(x_{0},...,x_{4}) + x_{4}q_{4}(x_{0},...,x_{4})=0
\end{equation*} 
where $q_{i}(x_{0},...,x_{4})$ are homogeneous polynomials of degree two. If moreover $\ell_{0}$ is a line of the second type so that the plane $P={\rm{span}}(\ell_{0},v_{2})$ tangent to $X$ in all points of $\ell_{0}$ is such that $\alpha_{2}=0, \alpha_{3}=0$ and $\alpha_{4}=1$ then the equation of $X$ may take the form
\begin{equation*}
F(x_{0},...,x_{4}) = x_{2}q_{2}(x_{0},...,x_{4}) + x_{3}q_{3}(x_{0},...,x_{4}) + x_{4}^{2}l(x_{0},...,x_{4})=0
\end{equation*} 
where $l(x_{0},...,x_{4})$ is a linear polynomial. Set $l(x_{0},...,x_{4})=a_{0}x_{0}+a_{1}x_{1}+a_{2}x_{2}+a_{3}x_{3}+a_{4}x_{4}$. Using the techniques of  \cite{murre1972algebraic} we get $$a_{0} = \dfrac{1}{2}\phi_{4,4}^{1,0}(\ell_{0})~~~~~~\mbox{and}~~~~~~ a_{1} = \dfrac{1}{2}\phi_{4,4}^{0,1}(\ell_{0}).$$
So for $\ell_{0}$ a triple line on $X$ we have $a_{0}=0,a_{1}=0$. As corrected in \cite[p.~ 17]{lahoz2021geometry}, Murre erroneously wrote $l(x_{0},x_{1})=a_{0}x_{0}+a_{1}x_{1}$ for the linear polynomial instead in equation (13) of \cite[p.~ 167]{murre1972algebraic}, implying that for $\ell_{0}\subset X$ a triple line this polynomial vanishes and $X$ contains the plane $\lbrace x_{2}=0, x_{3}=0\rbrace$: the cubic threefold $X$ would therefore be singular. Whereas for $\ell_{0}$ a triple line on $X$ the equation of $X$ may be written \begin{equation*}
F(x_{0},...,x_{4}) = x_{2}q_{2}(x_{0},...,x_{4}) + x_{3}q_{3}(x_{0},...,x_{4}) + kx_{4}^{3}=0
\end{equation*}
with $k\neq 0$ and the cubic threefold $X$ remains smooth.
\end{remark}

\section{Counting triple lines on the Fermat cubic}

In this section we prove the following proposition. \Fermat*

\begin{proof} 

In the affine chart $p_{0,1}=1$ the set $\M(F_{4})$ of lines of the second type of the Fermat cubic is a non smooth curve given by the following equations:
\begin{eqnarray*}
p_{1,2}^3 + p_{1,3}^3 + p_{1,4}^3 - 1&=&0\\
p_{0,2}p_{1,2}^{2}+p_{0,3}p_{1,3}^2 + p_{0,4}p_{1,4}^2&=&0\\
p_{0,2}^{2}p_{1,2}+p_{0,3}^2p_{1,3} +p_{0,4}^2p_{1,4}&=&0\\
p_{0,2}^3+p_{0,3}^3 + p_{0,4}^3 + 1&=&0\\
(p_{0,4}p_{1,3}-p_{0,3}p_{1,4})(p_{0,4}p_{1,2}-p_{0,2}p_{1,4})(p_{0,3}p_{1,2}-p_{0,2}p_{1,3})&=&0
\end{eqnarray*}
with
\begin{equation*}
(p_{0,4}p_{1,3}-p_{0,3}p_{1,4})(p_{0,4}p_{1,2}-p_{0,2}p_{1,4})(p_{0,3}p_{1,2} -p_{0,2}p_{1,3}) = \m(\ell)
\end{equation*}
its local equation in the Fano surface $\F(F_{4})$. Consider the quadrics $$Q_{1}=p_{0,4}p_{1,3}-p_{0,3}p_{1,4},~~Q_{2}=p_{0,4}p_{1,2}-p_{0,2}p_{1,4},~~Q_{3}=p_{0,3}p_{1,2} -p_{0,2}p_{1,3}.$$
We note that the intersection of the Fano surface $\F(F_{4})$ with $Q_{1}, Q_{2}$ and $Q_{3}$, denoted by $\M_{1}(F_{4}), \M_{2}(F_{4})$ and $\M_{3}(F_{4})$ respectively, are smooth curves that correspond to the irreducible components of $\M(F_{4})$.
\item The intersection points $[\ell]$ of the curves $\M_{1}(F_{4})$ and $\M_{2}(F_{4})$ are given by the equations:
\begin{center}
$p_{0,2}p_{1,3}^3-p_{0,2},~~p_{1,3}^4-p_{1,3},~~p_{0,2}^3+p_{1,3}^3,~~p_{0,3}^3- p_{1,3}^3+1,~~p_{1,2}^3+p_{1,3}^3-1,\newline p_{0,2}p_{0,3},p_{0,2}p_{1,2},~~p_{0,3}p_{1,3},~~p_{1,2}p_{1,3},~~p_{0,4},~~p_{1,4}$.
\end{center}
If $p_{1,3}=0$ then we have 9 intersection points given by the following equations:
$$p_{0,3}^3=-1,~~p_{1,2}^3=1,~~p_{0,2}=0,~~p_{0,4}=0,~~p_{1,4}=0.$$
We get $v_{2}=(0:0:0:0:1)$.
We have thus $f(t_{0}v_{0}+t_{1}v_{1}+t_{2}v_{2})=t_{2}^{3}$
implying that $\ell$ is a triple line on the Fermat cubic. Also the Jacobian matrix of the curve $\M(F_{4})$ at the point $[\ell]$ is not of rank 5; $[\ell]$ is a singular point of $\M(F_{4})$. We get 9 triple lines with coordinates $(0, p_{0,3}, 0, p_{1,2}, 0, 0)$ such that $p_{0,3}^{3}=-1$ and  $p_{1,2}^{3}=1$.
\newline
If $p_{1,3}\neq 0$ then we have 9 intersection points given by the equations:
$$p_{0,2}^3=-1,~~p_{1,3}^3=1,~~p_{0,3}=0,~~p_{0,4}=0,~~p_{1,2}=0,~~p_{1,4}=0.$$
They correspond to triple lines with coordinates $(p_{0,2},0,0,0,p_{1,3},0)$ with $p_{0,2}^{3}=-1$ and  $p_{1,3}^{3}=1$.

Similarly we find the intersection points $[\ell]$ of the curves $\M_{1}(F_{4})$ and $\M_{3}(F_{4})$, $\M_{2}(F_{4})$ and $\M_{3}(F_{4})$. We get 18 triple lines at each intersection. We note that the intersection of the three curves $\M_{1}(F_{4}), \M_{2}(F_{4})$ and $\M_{3}(F_{4})$ is empty. Therefore there are 54 triple lines in the affine chart $p_{0,1}=1$.

We do the same computations to count the triple lines on the Fermat cubic in the other affine charts of $\mathbb{G}(1,4)$ by using SageMath \cite{sagemath}. However a triple line on $F_{4}$ can be viewed in many affine charts and then counted several times. In order to avoid such a situation we use the Plücker stratification (see \cite{boissiere2007counting}). In the stratum $p_{0,1}=0, p_{0,2}=1$ we get 36 triple lines, we get 18 triple lines in the stratum $p_{0,1}=p_{0,2}=0, p_{0,3}=1$ and in the stratum $p_{0,1}=...=p_{0,4}=0, p_{1,2}=1$, we get
9 triple lines in the stratum $p_{0,1}=...=p_{1,2}=0, p_{1,3}=1$ whereas the other strata contain no triple line. So there are exactly 135 triple lines on the Fermat cubic $F_{4}\subset \mathbb{P}^{4}$.
\end{proof}

\begin{remark} 
It is clear that the curve $\M(F_{4})$ of lines of the second type of the Fermat cubic $F_{4}\subset \mathbb{P}^{4}$ is reducible. The Fano surface $\F(F_{4})$ of the Fermat cubic contains 30 elliptic curves that intersect in 135 points \cite{roulleau2011fano}. These intersection points are exactly the triple lines on the Fermat cubic. We wish to point out that the elliptic curves of the Fano surface $\F(F_{4})$ are exactly the irreducible components of the curve $\M(F_{4})$ of lines of the second type.
\end{remark}

\bibliographystyle{amsalpha}
\bibliography{mabiblioV2}

\providecommand{\bysame}{\leavevmode\hbox to3em{\hrulefill}\thinspace}
\providecommand{\MR}{\relax\ifhmode\unskip\space\fi MR }
\providecommand{\MRhref}[2]{%
  \href{http://www.ams.org/mathscinet-getitem?mr=#1}{#2}
}
\providecommand{\href}[2]{#2}
\begin{thebibliography}{{The}20}

\bibitem[AK77]{altman1977foundations}
Allen~B Altman and Steven~L Kleiman, \emph{Foundations of the theory of fano
  schemes}, Compositio Mathematica \textbf{34} (1977), no.~1, 3--47.

\bibitem[BCS19]{boissiere2019cubic}
Samuel Boissi{\`e}re, Chiara Camere, and Alessandra Sarti, \emph{Cubic
  threefolds and hyperk{\"a}hler manifolds uniformized by the 10-dimensional
  complex ball}, Mathematische Annalen \textbf{373} (2019), no.~3-4,
  1429--1455.

\bibitem[BS07]{boissiere2007counting}
Samuel Boissière and Alessandra Sarti, \emph{Counting lines on surfaces},
  Annali della Scuola Normale Superiore di Pisa-Classe di Scienze \textbf{6}
  (2007), no.~1, 39--52.

\bibitem[CG72]{clemens1972intermediate}
C~Herbert Clemens and Phillip~A Griffiths, \emph{The intermediate jacobian of
  the cubic threefold}, Annals of Mathematics (1972), 281--356.

\bibitem[Huy19]{huybrechts2019geometry}
Daniel Huybrechts, \emph{The geometry of cubic hypersurfaces}, Notes available
  from http://www. math. uni-bonn. de/people/huybrech (2019).

\bibitem[LNR21]{lahoz2021geometry}
Mart\'{i} Lahoz, Juan~Carlos Naranjo, and Andr\'{e}s Rojas, \emph{Geometry of
  {P}rym semicanonical pencils and an application to cubic threefolds}, arXiv
  preprint arXiv:2106.08683 (2021).

\bibitem[Mur72]{murre1972algebraic}
Jacob~P Murre, \emph{Algebraic equivalence modulo rational equivalence on a
  cubic threefold}, Compositio Mathematica \textbf{25} (1972), no.~2, 161--206.

\bibitem[Rou09]{roulleau2009elliptic}
Xavier Roulleau, \emph{Elliptic curve configurations on fano surfaces},
  manuscripta mathematica \textbf{129} (2009), no.~3, 381--399.

\bibitem[Rou11]{roulleau2011fano}
\bysame, \emph{The fano surface of the fermat cubic threefold, the del pezzo
  surface of degree 5 and a ball quotient}, Proceedings of the American
  Mathematical Society \textbf{139} (2011), no.~10, 3405--3412.

\bibitem[{The}20]{sagemath}
{The Sage Developers}, \emph{{S}agemath, the {S}age {M}athematics {S}oftware
  {S}ystem ({V}ersion 3.7.7)}, 2020, {\tt https://www.sagemath.org}.

\bibitem[Tju71]{tjurin1971geometry}
AN~Tjurin, \emph{The geometry of the fano surface of a nonsingular cubic and
  torelli theorems for fano surfaces and cubics}, Mathematics of the
  USSR-Izvestiya \textbf{5} (1971), no.~3, 517.

\bibitem[Tju72]{tjurin1972set}
\bysame, \emph{On the set of singularities of the poincaré divisor of the
  picard variety of the fano surface of a nonsingular cubic}, Izvestiya:
  Mathematics \textbf{6} (1972), no.~5, 938--948.

\end{thebibliography}

\end{document}